\date{}
\newcommand{\C}{\mathbb{C}}
\newcommand{\g}{\mathfrak{g}}
\newcommand{\bdm}{\begin{displaymath}}
\newcommand{\edm}{\end{displaymath}}
\theoremstyle{definition}
\newtheorem{thm}{Theorem}[section]
\newtheorem{cor}{Corollary}[section]
\title{Index and nullity of a family of harmonic tori in the sphere}
\author{R. Pacheco}
\begin{document}

\maketitle

\begin{abstract}
In this note we investigate the $(E)$-index and the $(E)$-nullity of vacuum solutions from the two-torus into the two sphere.
\end{abstract}

\emph{Keywords:} Harmonic maps, symmetric spaces, vacuum solutions,
index and nullity.

 \emph{Mathematics Subject Classification 2010:} 58E20, 53C43, 53C35.

\emph{Departamento de Matem\'{a}tica, Universidade da Beira
Interior, Rua Marqu\^{e}s d'\'{A}vila e Bolama 6201-001
Covilh\~{a} - Portugal}

\emph{email: rpacheco@ubi.pt }
\section{Introduction}

Harmonic maps are the critical points of the energy
$$E(\varphi)=\frac12\int_M \mathrm{trace}_g\varphi^*h\,d\mathrm{vol}_M.$$
on the space of smooth maps between two Riemannian manifolds, $(M,g)$ and $(N,h)$. The $(E)$-index of a harmonic map $\varphi$ is defined as  the dimension of the largest subspace of $\Gamma(\varphi^{-1}TN)$ on which the Hessian $H(E)_\varphi$ is negative definite. Hence it gives  a measure of the instability of $\varphi$. On the other hand, the $(E)$-nullity of $\varphi$ plays an important rule in the determination of the structure of the subspace of harmonic maps between two Riemannian manifolds in a neighborhood of $\varphi$ \cite{Ur}. However, the calculation of these quantities is nontrivial even for the simplest cases, such as the identity map \cite{Ur}.

In this note we investigate the Morse index and the nullity of a certain family of harmonic maps from a two-torus $T^2$ to the round two-sphere $S^2$:  the vacuum solutions. More precisely, we show that the problem of determining such quantities amounts to the problem of counting the number of lattice points on and inside an ellipse whose dimensions are controlled by the energy of the vacuum solution (Theorem \ref{main}).

 In general, vacuum solutions from  $T^2$ to a symmetric space $N$ are the simplest harmonic maps of semisimple finite type between such spaces \cite{BP}. Burstall and Pedit  \cite{BP} proved  that any harmonic map from $T^2$ to $N$ of semisimple finite type lies in the dressing orbit of some vacuum solution. On the other hand, from \cite{BFP} we know that all non-conformal harmonic maps $\varphi:T^2\to S^2$ are of semisimple finite type (in this particular setting, conformal harmonic maps are all holomorphic, which are known to be weakly stable and whose nullity is given by the dimension of the space of holomorphic sections of the pull-back bundle $\varphi^{-1}TN$ \cite{Ur}). Hence,  once we know how dressing actions behave with respect to the $(E)$-index and $(E)$-nullity, it seems that the results we present here can be relevant in the approach to the general case of non-confomal harmonic maps from $T^2$ to $S^2$.

\section{The second variation formula}

Let $(M,g)$ and $(N,h)$ be Riemannian manifolds, $M$ compact, and $\varphi:M\to N$ a harmonic map.
The Hessian of the energy $E$ at $\varphi$ is given by \cite{Ur}
$$H(E)_\varphi(V,W)=\int_Mh(J_\varphi(V),W)\,d\mathrm{vol}_M,$$
where $V,W\in \Gamma(\varphi^{-1}TN)$, the space of variation vector fields along $\varphi$, and $J_\varphi$ is the Jacobi operator
$$J_\varphi (V)=\Delta_\varphi V -\mathrm{trace}R^N(d\varphi,V)d\varphi.$$ Here $\Delta_{\varphi}V=-\mathrm{trace}\nabla dV$ is the Laplacian on $\Gamma(\varphi^{-1}TN)$ and the sign convention on the curvature $R$ is
$$R(X,Y)Z=-\nabla_X\nabla_YZ+\nabla_Y\nabla_XZ+\nabla_{[X,Y]}Z.$$

 Since $J_\varphi$ is a self-adjoint elliptic differential operator acting on $\Gamma(\varphi^{-1}TN)$, the eigenvalues of $J_\varphi$ have finite multiplicities and form a bounded below discrete set without accumulation points: $\lambda_1(\varphi)<\lambda_2(\varphi)<\lambda_3(\varphi)<\ldots$ The space $\Gamma(\varphi^{-1}TN)$ splits as the orthogonal sum of the eigenspaces $V_{\lambda_i(\varphi)}$ of $J_\varphi$. The $(E)$-\emph{nullity} of $\varphi$ is the dimension of the kernel $V_0(\varphi)$ of $J_\varphi$. The $(E)$-\emph{index} of $\varphi$ is the dimension of the largest subspace of $\Gamma(\varphi^{-1}TN)$ on which the Hessian $H(E)_\varphi$ is negative definite, that is,
 $\mathrm{index}(\varphi)=\sum_{\lambda_i<0}\mathrm{dim} V_{\lambda_i(\varphi)}.$

In the case $M$ is a Riemann surface, the energy is conformally invariant for the domain metric. Hence, the harmonicity, $(E)$-nullity and $(E)$-index of a map are also conformally invariant for the domain metric. In local conformal complex coordinate $z=x+iy$, the Jacobi operator is given by
$$J_\varphi(V)=-4\mu^2\Big\{\frac{\nabla}{\partial \bar{z}}\frac{\nabla}{\partial {z}}V+R^N\big(\frac{\partial \varphi}{\partial \bar{z}},V\big)\frac{\partial \varphi}{\partial \bar{z}}\Big\},$$
where the smooth function $\mu$ is such that $e_1=\mu \frac{\partial}{\partial x},e_2=\mu \frac{\partial}{\partial y}$  is a local orthonormal frame.

\section{Symmetric spaces}

Next we recall some well known facts about symmetric spaces. For details see \cite{BR}.

Let $G$ be a compact, connected and semisimple matrix Lie group, and $N=G/H$ a symmetric space associated to the involution $\sigma$ on $G$. Fix a base point $x_0\in N$ and a $G$-invariant metric on $N$. Consider the corresponding symmetric decomposition of the Lie
algebra $\g$ of $G$: $\mathfrak{g}=\mathfrak{h}\oplus \mathfrak{m}$, where $\mathfrak{h}$ is the Lie algebra of $H$ and
\begin{equation}\label{s}
 [\mathfrak{h},\mathfrak{m}]\subset \mathfrak{m},\quad [\mathfrak{m},\mathfrak{m}]\subset \mathfrak{h}.
\end{equation}
Denote
by $P_\mathfrak{m}$ (resp. $P_\mathfrak{h}$) the projection onto
$\mathfrak{m}$ (resp. $\mathfrak{h}$). The $\mathfrak{g}$-valued
one form $\beta$ on $N$ defined by
$$\beta_x(X)=\mathrm{Ad}_gP_\mathfrak{m}(\mathrm{Ad}_{g^{-1}})\xi\quad
\mbox{if}\quad X=\frac{d}{dt}\Big{|}_{t=0}\exp t\xi\cdot x$$ and
$x=g\cdot x_0$  is called the \emph{Maurer-Cartan form} of the
symmetric space $N$. The Maurer-Cartan form $\beta$ of $N$ gives a vector bundle
isomorphism $\beta:TN\to [\mathfrak{m}]$, where the fiber of
$[\mathfrak{m}]$ at $x=g\cdot x_0$, $[\mathfrak{m}]_x$, is given
by $\mathrm{Ad}_g(\mathfrak{m})$.

Since $N$ is symmetric, the
corresponding Levi-Civita connection on $[\mathfrak{m}]$ is given
by flat differentiation followed by the projection onto
$[\mathfrak{m}]$:
\begin{equation}\label{nabla}
\beta\circ\nabla =P_{[\mathfrak{m}]}\circ
d\circ\beta.\end{equation} On the other hand, the curvature $R^N$ of $\nabla$ satisfies
\begin{equation}\label{R}
\beta\circ
R^N=\frac12\big[(1-P_{[\mathfrak{m}]})[\beta\wedge\beta],\beta\big]=
\frac12\big[[\beta\wedge\beta],\beta\big].
\end{equation}
%We also have
%$$d\beta=(1-\frac12P_{[\mathfrak{m}]})[\beta\wedge\beta]=[\beta\wedge\beta].$$

In the study of harmonic maps into symmetric spaces, it is useful to consider the  Cartan embedding  $\iota: G/H\to G$, which is defined by $\iota(g\cdot x_0)=\sigma(g)g^{-1}$. It is well known that $\iota$ is totally geodesic, so that a smooth map $\varphi:M\to G/H$ is harmonic if and only if $\iota\circ\varphi:M\to G$ is harmonic.

\section{Vacuum solutions}

Let $N=G/H$ be a symmetric space. A smooth map $\varphi:M\to N$ is harmonic if, and only if, the pull-back of the Maurer-Cartan form is co-closed (\cite{Ra}): $d^*\varphi^{-1}\beta=0$. Now, suppose we have a framing of $\varphi$, that is, a smooth map $F:M\to G$ such that $\varphi=F\cdot x_0$. Set $\alpha=F^{-1}dF$ and decompose it with respect to $\g=\mathfrak{h}\oplus\mathfrak{m}$:  $\alpha=\alpha_\mathfrak{h}+\alpha_{\mathfrak{m}}$. Taking account that
\begin{equation}\label{-1}
\varphi^{-1}\beta=\mathrm{Ad}_F\alpha_\mathfrak{m},
 \end{equation}
the harmonicity condition on $\varphi$ is equivalent to
\begin{equation}\label{har}
d^*\alpha_\mathfrak{m}+[\alpha\wedge *\alpha_\mathfrak{m}]=0.
\end{equation}

One of the simplest cases  occurs  when we take $A\in\mathfrak{m}^\C$ such that $[A,\overline{A}]=0$. Define $F_A:\C\to G$ by
$F_{A}(z)=\exp(zA+\bar{z}\overline{A})$. It is clear that
\begin{equation}\label{ff}\alpha=F_A^{-1}dF_A=Adz+\overline{A}d\bar{z}
\end{equation}
 satisfies (\ref{har}). Hence we get a harmonic map $\varphi_A=F_A\cdot x_0:\C\to G/H$, which is called a \emph{vacuum solution.}

\subsection{Vacuum solutions from the torus to the sphere}

% for all $\lambda\in S^1$, the connection  $d+\alpha_\lambda$, with $\alpha_\lambda=\lambda^{-1}Adz+\lambda\overline{A}d\bar{z}$, is flat
%\begin{defn}\emph{ A \emph{vacuum solution} is an extending framing of the form
%$F^{\eta_A}$ where $\eta_A=\lambda^{-1}A$ with
%.}
%\end{defn}
%In this simple case, we can perform the Iwasawa decomposition
%explicitly:
%$$\exp(z\eta_A)=\exp(z\lambda^{-1}A+\bar{z}\lambda\overline{A})\exp(-\bar{z}\lambda\overline{A}),$$
%so that
%$$F^{\eta_A}(z)=\exp(z\lambda^{-1}A+\bar{z}\lambda\overline{A}).$$
%Of course that any vacuum solution corresponds to a harmonic map
%of finite type. Moreover, we have:
%$
%
%\begin{prop}
%\emph{$g\#F^{\eta_A}$ is of finite type if and only if, for some
%$d=1\mod 2$ there is $\xi\in\Lambda_d$ such that
%$\xi_{-d}=\mathrm{Ad}_{g(0)}A$ and $$[\xi,\mathrm{Ad}_gA]=0.$$}
%\end{prop}

The round sphere $S^2$ is a symmetric $SU(2)$-space with stabilizers conjugate to $S(U(1)\times U(1))$. For a convenient choice of base-point, the
 symmetric decomposition $\mathfrak{su}(2)=\mathfrak{h}\oplus \mathfrak{m}$ is given by
\begin{align*}
\mathfrak{h}=\Big\{  \left(%
\begin{array}{cc}
  ix & 0 \\
  0 & -ix \\
\end{array}%
\right),\,\,x\in \mathbb{R}\Big\},\quad\quad\mathfrak{m}=\Big\{  \left(%
\begin{array}{cc}
  0 & z \\
  -\bar{z} & 0 \\
\end{array}%
\right),\,\,z\in \mathbb{C}\Big\}
\end{align*}
and the corresponding
  involution $\sigma$ is given by conjugation by
$$Q= \left(%
\begin{array}{cc}
  1 & 0 \\
  0 & -1 \\
\end{array}%
\right).
$$
The round metric is that induced by the $SU(2)$-invariant inner product on  $\mathfrak{su}(2)$ given by $(X,Y)=-\frac12\mathrm{trace}XY$.

Consider $A\in\mathfrak{m}^\C$. Then $A$ is of the
form
$$A=\left(%
\begin{array}{cc}
  0 & \alpha \\
  \beta & 0 \\
\end{array}%
\right)
$$
where $\alpha,\beta\in \C$. Since $\overline{A}=-A^*$ we get:
$$[A,\overline{A}]=\left(%
\begin{array}{cc}
  \alpha\bar{\alpha}-\beta\bar{\beta} & 0 \\
  0 &  \beta\bar{\beta}- \alpha\bar{\alpha}\\
\end{array}%
\right).
$$
Hence, $[A,\overline{A}]=0$ if and only if
\begin{equation}\label{ab}
\alpha\bar{\alpha}=\beta\bar{\beta}.
\end{equation}

 Next we describe
which of these matrices give rise to harmonic maps from a two-torus to $S^2$. We shall use the  Cartan embedding of $S^2$ to
identify it with a totally geodesic submanifold of $SU(2)$. In
this case, the harmonic map corresponding to
$F_{A}$ is given by:
$$\varphi_A(z)=\sigma(F_A){F_A}^{-1}=\exp{\big(-2zA-2\bar{z}\overline{A}\big)}.$$
Notice that the condition $[A,\overline{A}]=0$ implies that $A$ is semisimple. Hence there exists  a  constant matrix $G$ such that
$$\exp{\big(-2zA-2\bar{z}\overline{A}\big)}=G\left(%
\begin{array}{cc}
  e^{2\bar{z}\overline{\sqrt{\alpha\beta}}-2z\sqrt{\alpha\beta}} & 0 \\
  0 & e^{2z\sqrt{\alpha\beta}-2\bar{z}\overline{\sqrt{\alpha\beta}}} \\
\end{array}%
\right)G^{-1}.$$ It is now easy to check that $\varphi$ is double
periodic with periods $\omega_1$ and $\omega_2$ if
and only if
\begin{equation}\label{popo}
\sqrt{\alpha\beta}=\pi i \,\,\frac{\overline{\omega}_2 n+\overline{\omega}_1 m}{\overline{\omega}_2\omega_1-\omega_2\overline{\omega}_1}\end{equation} with
$n,m\in\mathbb{Z}$.

Let $T^2$ be the corresponding torus. The energy of such harmonic map is given by
\begin{align}\label{en}
 E(\varphi_A)&=\frac12\int_{T^2}\Big|\varphi_A^{-1}\frac{\partial \varphi_A}{\partial x}\Big|^2+\Big|\varphi_A^{-1}\frac{\partial \varphi_A}{\partial y}\Big|^2dxdy=4 \pi^2 \frac{|\overline{\omega}_2 n+\overline{\omega}_1 m|^2}{|\overline{\omega}_2\omega_1-\omega_2\overline{\omega}_1|}.
 \end{align}

\section{Index and Nullity of Vacuum Solutions}

Let $N=G/H$ be a symmetric space with Maurer-Cartan form $\beta$ and $\varphi_A:\C\to N$ a vacuum solution with $\varphi_A=F_A\cdot x_0$.  If $V$ is a section of $\varphi_A^{-1}TN$, then there
 exists a smooth map $v:\C\to \mathfrak{m}$ such that $\beta(V)=F_AvF_A^{-1}$.
Moreover, from (\ref{-1}) we have:
$$\beta \big(\frac{\partial
\varphi_A}{\partial z}\big)=F_AAF_A^{-1}\quad \mbox{and} \quad
\beta\big(\frac{\partial
\varphi_A}{\partial \bar{z}}\big)=F_A\overline{A}F_A^{-1}.$$
Hence, by using  (\ref{s}), (\ref{nabla}), (\ref{R}) and (\ref{ff}):\begin{align*}
\beta\Big(\frac{\nabla}{\partial \bar{z}}\frac{\nabla}{\partial
{z}}V+&R\Big(\frac{\partial \varphi_A}{\partial
\bar{z}},V\Big)\frac{\partial \varphi_A}{\partial
z}\Big)=P_{[\mathfrak{m}]}\frac{\partial}{\partial
\bar{z}}\beta\Big(\frac{\nabla}{\partial
{z}}V\Big)+\big[\big[\beta\big(\frac{\partial \varphi_A}{\partial
\bar{z}}\big),\beta(V)\big],\beta\big(\frac{\partial
\varphi_A}{\partial z}\big)\big]\\
&=P_{[\mathfrak{m}]}\frac{\partial}{\partial\bar{z}}
P_{[\mathfrak{m}]}\frac{\partial}{\partial z}F_A vF_A^{-1} +
\big[\big[F_A \overline{A}F_A^{-1},F_A vF_A^{-1}\big],
F_A AF_A^{-1}\big]\\
%&=P_{[\mathfrak{m}]}\frac{\partial}{\partial\bar{z}}\Big\{
%P_{[\mathfrak{m}]}F\big[F^{-1}\frac{\partial F}{\partial {z}},
%v\big]F^{-1}-F vF^{-1}\Big\} +
%F\big[\big[\overline{A}, v\big], A\big]F^{-1}\\
&=P_{[\mathfrak{m}]}\frac{\partial}{\partial\bar{z}}\Big\{
P_{[\mathfrak{m}]}\underbrace{F_A\big[{A}, v\big]F_A^{-1}}_{\in
[\mathfrak{h}]}+F_A \frac{\partial v}{\partial {z}}F_A^{-1}\Big\} +
F_A\big[\big[\overline{A}, v\big], A\big]F_A^{-1}\\
&=P_{[\mathfrak{m}]}\frac{\partial}{\partial\bar{z}}F_A
\frac{\partial v}{\partial
{z}}F_A^{-1}+F_A\big[\big[\overline{A}, v\big], A\big]F_A^{-1}\\
&=F_A\frac{\partial^2v}{\partial\bar{z}\partial z}
F_A^{-1}-F_A\big[A,\big[\overline{A}, v\big]\big]F_A^{-1}.
\end{align*}
Then
$$\beta(J_{\varphi_A}(V))=-4\mu^{2}F_A\Big\{\frac{\partial^2v}{\partial\bar{z}\partial z}
-\big[A,\big[\overline{A}, v\big]\big]\Big\}F_A^{-1}.$$

Now, let us come back to the particular case of harmonic tori in the sphere. Suppose that $\varphi_A$ is periodic with periods $\omega_1$ and $\omega_2$. Consider on the corresponding torus $T^2$ the flat metric $\mu=1$. Since $F(z+\omega_i)=F(z)F(\omega_i)$ and $\varphi_A(z)=F^{-2}(z)$, the periodicity of $\varphi$ implies that $F(\omega_i)$ belongs to the center $Z=\{\pm Id\}$ of $SU(2)$. Hence, a vector field $V\in \Gamma{(\varphi_A^{-1}TN)}$ is periodic if, and only if, $v$ is periodic. So, in this case, the index and the nullity of $J_{\varphi_A}$ are given by the index and nullity of the elliptic operator
$$\mathcal{J}_{\varphi_A}(v)= -\frac{\partial^2v}{\partial\bar{z}\partial z}+\big[A,\big[\overline{A}, v\big]\big]$$ acting on the space of functions from $T^2$ to $\mathfrak{m}$.

Set
$$A=\left(%
\begin{array}{cc}
  0 & \alpha \\
  \beta & 0 \\
\end{array}%
\right)\,\,\mbox{and}\quad v=\left(%
\begin{array}{cc}
  0 & f \\
  -\bar{f} & 0 \\
\end{array}%
\right),
$$
with $\alpha\bar{\alpha}=\beta\bar{\beta}$ and
$f:\mathbb{C}\to\mathbb{C}$ periodic with periods $\omega_1=\omega_{1x}+i\omega_{1y}$ and $\omega_2=\omega_{2x}+i\omega_{2y}$. The smooth function $v$ is an eigenvector of $\mathcal{J}_{\varphi_A}$ with eigenvalue $\lambda$ if and only if
\begin{equation}\label{valpp}
\frac{\partial^2 f}{\partial^2x}+\frac{\partial^2
f}{\partial^2y}+8f\alpha\bar{\alpha}+8\bar{f}\alpha\bar{\beta}=-4\lambda f.
\end{equation}
%We have:
%$$\big[
%A,\big[\overline{A}, v\big]\big]=\left(%
%\begin{array}{cc}
%  0 & -2f\alpha\bar{\alpha}-2\bar{f}\alpha\bar{\beta} \\
%2\bar{f}\alpha\bar{\alpha}+2{f}\bar{\alpha}{\beta}& 0 \\
%\end{array}%
%\right).$$
We express a Fourier expansion of $f$ as
\begin{equation*}
f(z=x+iy)=\sum_{k,l\in\mathbb{Z}}f_{k,l}e^{2k\pi i \frac{\omega_{1x}x+\omega_{1y}y}{|\omega_1|^2}}e^{2l\pi i \frac{\omega_{2x}x+\omega_{2y}y}{|\omega_2|^2}}.
\end{equation*}
%Write
%\begin{equation}\label{vs}
%v_{k,l}=\left(%
%\begin{array}{cc}
%  0 & f_{k,l} \\
%  -\overline{f_{k,l}} & 0 \\
%\end{array}%
%\right).
%\end{equation}
Equation (\ref{valpp}) becomes  equivalent to
\begin{equation}\label{1}
\Big\{\Big(k\frac{\omega_{1x}}{|\omega_{1}|^2}+l\frac{\omega_{2x}}{|\omega_{2}|^2}\Big)^2+\Big(k\frac{\omega_{1y}}{|\omega_{1}|^2}+l\frac{\omega_{2y}}{|\omega_{2}|^2}\Big)^2-\frac{2}{\pi^2}\alpha\bar{\alpha}-\frac{1}{\pi^2}\lambda\Big\}f_{k,l}=\frac{2}{\pi^2}\overline{f_{-k,-l}}
\alpha\overline{\beta}
\end{equation}
for all integers $k,l$. By taking the conjugate of this we obtain
\begin{equation}\label{2}
\Big\{\Big(k\frac{\omega_{1x}}{|\omega_{1}|^2}+l\frac{\omega_{2x}}{|\omega_{2}|^2}\Big)^2+\Big(k\frac{\omega_{1y}}{|\omega_{1}|^2}+l\frac{\omega_{2y}}{|\omega_{2}|^2}\Big)^2-\frac{2}{\pi^2}\alpha\bar{\alpha}-\frac{1}{\pi^2}\lambda\Big\}\overline{f_{-k,-l}}=\frac{2}{\pi^2}{f_{k,l}}
\overline{\alpha}{\beta}
\end{equation}
From (\ref{1}) and (\ref{2}) results that, since $\alpha\bar{\alpha}=\beta\bar{\beta}$,
\begin{equation}\label{3}
\Big(k\frac{\omega_{1x}}{|\omega_{1}|^2}+l\frac{\omega_{2x}}{|\omega_{2}|^2}\Big)^2+\Big(k\frac{\omega_{1y}}{|\omega_{1}|^2}+l\frac{\omega_{2y}}{|\omega_{2}|^2}\Big)^2-\frac{2}{\pi^2}\alpha\bar{\alpha}-\frac{1}{\pi^2}\lambda=\pm\frac{2}{\pi^2}\alpha\bar{\alpha}
\end{equation}
Set $$\theta(k,l)=\Big(k\frac{\omega_{1x}}{|\omega_{1}|^2}+l\frac{\omega_{2x}}{|\omega_{2}|^2}\Big)^2+\Big(k\frac{\omega_{1y}}{|\omega_{1}|^2}+l\frac{\omega_{2y}}{|\omega_{2}|^2}\Big)^2$$ and observe, from (\ref{ab}), (\ref{popo}), and (\ref{en}),  that
\begin{equation}\label{sl}
\alpha\bar{\alpha}= \frac{E(\varphi_A)}{4|\overline{\omega}_2\omega_1-\omega_2\overline{\omega}_1|}.
\end{equation}
\begin{thm}\label{main}
\emph{The $(E)$-nullity and the $(E)$-index of the vacuum solution $\varphi_A:T^2\to S^2$ are given by}
\begin{align}\label{nul}
\mathrm{nullity}(\varphi_A)&=1+\#\Big\{(k,l)\in\mathbb{Z}^2:\, \theta(k,l)=\frac{E(\varphi_A)}{\pi^2 |\overline{\omega}_2\omega_1-\omega_2\overline{\omega}_1|}\Big\};\\\label{index}
\mathrm{index}(\varphi_A)&=\#\Big\{(k,l)\in\mathbb{Z}^2:\, \theta(k,l)<\frac{E(\varphi_A)}{\pi^2 |\overline{\omega}_2\omega_1-\omega_2\overline{\omega}_1|}\Big\}.
\end{align}
\end{thm}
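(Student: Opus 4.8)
The starting point is the reduction already carried out above: the $(E)$-index and $(E)$-nullity of $\varphi_A$ coincide with the index and nullity of $\mathcal{J}_{\varphi_A}$ acting on periodic functions $v:T^2\to\m$, and, expressing $v$ through $f:\C\to\C$, equation~(\ref{valpp}) exhibits $\mathcal{J}_{\varphi_A}$ as the real-linear operator
$$f\longmapsto-\tfrac14\big(\partial_x^2+\partial_y^2\big)f-2\alpha\bar\alpha\,f-2\alpha\bar\beta\,\bar f$$
on the real Hilbert space of $L^2$ functions $T^2\to\C$. Equation~(\ref{3}) already tells us that the only possible eigenvalues are $\pi^2\theta(k,l)$ and $\pi^2\theta(k,l)-4\alpha\bar\alpha$ with $(k,l)\in\mathbb{Z}^2$, so the entire content of the theorem is a multiplicity count, which I would carry out by diagonalising $\mathcal{J}_{\varphi_A}$ on each eigenspace of the Laplacian and then substituting~(\ref{sl}). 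Two standing facts are used throughout: the quadratic form $\theta$ is positive definite, because $\omega_1,\omega_2$ are $\R$-linearly independent (equivalently $|\overline\omega_2\omega_1-\omega_2\overline\omega_1|\neq0$), so $\theta(k,l)=0$ only at $(k,l)=(0,0)$ and each sublevel set of $\theta$ is the set of lattice points in an ellipse; and $\alpha\bar\alpha>0$ for a non-constant $\varphi_A$, by~(\ref{ab}) and~(\ref{popo}).

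Since multiplication by a constant and complex conjugation both commute with $\Delta=\partial_x^2+\partial_y^2$, the operator $\mathcal{J}_{\varphi_A}$ commutes with $\Delta$ and therefore preserves each eigenspace $E_c$ of $-\tfrac14\Delta$. On the flat torus $E_c=\mathrm{span}_\C\{e_{k,l}:\pi^2\theta(k,l)=c\}$, so $\dim_\R E_0=2$ while $\dim_\R E_c=2\,\#\{(k,l):\pi^2\theta(k,l)=c\}$ for $c>0$; on $E_c$ one has $\mathcal{J}_{\varphi_A}=(c-2\alpha\bar\alpha)\,\mathrm{Id}+L$ with $Lf=-2\alpha\bar\beta\,\bar f$. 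The key computation is that $L$ is conjugate-linear and, using $\alpha\bar\alpha=\beta\bar\beta$ (whence $|\alpha\bar\beta|=\alpha\bar\alpha$), satisfies $L^2=(2\alpha\bar\alpha)^2\,\mathrm{Id}$; thus $\tfrac{1}{2\alpha\bar\alpha}L$ is a conjugate-linear involution of $E_c$, multiplication by $i$ carries its $(+1)$-eigenspace isomorphically onto its $(-1)$-eigenspace, and hence these two spaces have equal real dimension $\tfrac12\dim_\R E_c$. On the $(+1)$-eigenspace $\mathcal{J}_{\varphi_A}$ acts as $c\,\mathrm{Id}$, on the $(-1)$-eigenspace as $(c-4\alpha\bar\alpha)\,\mathrm{Id}$.

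Assembling this over all $c\ge0$, the eigenvalue $\lambda$ of $\mathcal{J}_{\varphi_A}$ has multiplicity $\#\{(k,l)\in\mathbb{Z}^2:\pi^2\theta(k,l)=\lambda\}+\#\{(k,l)\in\mathbb{Z}^2:\pi^2\theta(k,l)=\lambda+4\alpha\bar\alpha\}$, an empty condition counting as $0$; this single formula also absorbs the exceptional mode $(0,0)$, whose eigenspace has real dimension $2$ rather than $4$. Because $\pi^2\theta\ge0$ with equality only at the origin, the case $\lambda=0$ yields $\mathrm{nullity}(\varphi_A)=1+\#\{(k,l):\pi^2\theta(k,l)=4\alpha\bar\alpha\}$, and summation over $\lambda<0$ yields $\mathrm{index}(\varphi_A)=\#\{(k,l):\pi^2\theta(k,l)<4\alpha\bar\alpha\}$. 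Since~(\ref{sl}) reads $4\alpha\bar\alpha=E(\varphi_A)/|\overline\omega_2\omega_1-\omega_2\overline\omega_1|$, both thresholds equal $E(\varphi_A)/\big(\pi^2|\overline\omega_2\omega_1-\omega_2\overline\omega_1|\big)$, which is exactly~(\ref{nul}) and~(\ref{index}).

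I expect the genuine obstacle to be the multiplicity bookkeeping: proving that each ordered pair $(k,l)\in\mathbb{Z}^2$ together with each of the two signs contributes exactly one unit. This is precisely what the conjugate-linear involution argument provides; alternatively one may diagonalise the $2\times2$ Hermitian block of $\mathcal{J}_{\varphi_A}$ that couples the Fourier coefficients of $e_{k,l}$ and $e_{-k,-l}$ and obtain the eigenvalues $\pi^2\theta(k,l)$ and $\pi^2\theta(k,l)-4\alpha\bar\alpha$ each with complex multiplicity one. In either approach one must still check separately that the exceptional origin block, which is real $2$-dimensional, obeys the same uniform formula; the remaining steps — the substitution~(\ref{sl}) and the identification of $\{\theta\le\text{const}\}$ with the lattice points in an ellipse — are routine.
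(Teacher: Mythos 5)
Your proposal is correct and follows essentially the same route as the paper: the reduction to the scalar equation (\ref{valpp}), the Fourier-mode analysis leading to the eigenvalue dichotomy $\lambda=\pi^2\theta(k,l)$ or $\lambda=\pi^2\theta(k,l)-4\alpha\bar{\alpha}$ of (\ref{3}), and the substitution (\ref{sl}). The only difference is presentational: your conjugate-linear involution $\tfrac{1}{2\alpha\bar\alpha}L$ on each Laplacian eigenspace makes explicit (and uniform, including the constant mode) the multiplicity bookkeeping that the paper's proof treats tersely via the coefficient equations and the remark that the index case is ``obtained similarly.''
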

\begin{proof}
From (\ref{1}), (\ref{3}) and (\ref{sl}), it follows that $V$ is a Jacobi field along $\varphi$ if and only if $v$ is of the form
$$v=v_0+\!\!\!\!\!\!\!\!\sum_{\theta(k,l)=\frac{E(\varphi_A)}{\pi^2 |\overline{\omega}_2\omega_1-\omega_2\overline{\omega}_1|}}\!\!\!\!\!\!\!\!v_{k,l}e^{2k\pi i \frac{\omega_{1x}x+\omega_{1y}y}{|\omega_1|^2}}e^{2l\pi i \frac{\omega_{2x}x+\omega_{2y}y}{|\omega_2|^2}}.$$
with $v_0\in \mathfrak{m}\cap\ker \mathrm{ad}_A\circ\mathrm{ad}_{\overline{A}} $ and
\begin{equation*}\label{vs} v_{k,l}=\left(%
\begin{array}{cc}
  0 & f_{k,l} \\
  -\overline{f_{-k,-l}} & 0 \\
\end{array}%
\right)
\end{equation*}
satisfying (\ref{1}) for $\lambda=0$. Hence,
taking account that $\ker \mathrm{ad}_A\circ\mathrm{ad}_{\overline{A}}$ is one-dimensional, we obtain formula (\ref{nul}) for the nullity of $\varphi_A$.

Formula (\ref{index})  for the index of $\varphi_A$ can be obtained similarly.

\end{proof}

So the problem of finding the nullity and index of vacuum solutions is reduced to the  problem of counting the lattice points on and inside an ellipse. For example:

Set $$A(x)=\#\big\{(k,l)\in\mathbb{Z}^2: ak^2+blk+cl^2< x\big\},$$
and $D\equiv 4ac-b^2>0$.
It is well known  \cite{Co} that $A(x)=\frac{2\pi}{\sqrt{D}}x+O(x^c)$ for some $c<1$; we have:
\begin{cor}
\emph{For vacuum solutions,
$$\lim_{E(\varphi_A)\to\infty}\frac{\mathrm{index}(\varphi_A)}{{E(\varphi_A)}}=\frac{1}{2\pi\sin^2{(\angle\omega_1\omega_2)}},$$
where $\angle\omega_1\omega_2$ denotes the angle between $\omega_1$ and $\omega_2$. }  \end{cor}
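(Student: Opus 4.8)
The plan is to apply the asymptotic lattice-point count to the quadratic form appearing in $\theta(k,l)$ and to translate the geometric constant $D = 4ac - b^2$ into the quantity $\sin^2(\angle\omega_1\omega_2)$. First I would identify the coefficients: expanding
$$\theta(k,l) = \Big(k\tfrac{\omega_{1x}}{|\omega_1|^2}+l\tfrac{\omega_{2x}}{|\omega_2|^2}\Big)^2 + \Big(k\tfrac{\omega_{1y}}{|\omega_1|^2}+l\tfrac{\omega_{2y}}{|\omega_2|^2}\Big)^2,$$
we read off $a = \tfrac{1}{|\omega_1|^2}$, $c = \tfrac{1}{|\omega_2|^2}$, and $b = \tfrac{2(\omega_{1x}\omega_{2x}+\omega_{1y}\omega_{2y})}{|\omega_1|^2|\omega_2|^2} = \tfrac{2\,\mathrm{Re}(\overline{\omega_1}\omega_2)}{|\omega_1|^2|\omega_2|^2}$. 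Then $D = 4ac - b^2 = \tfrac{4}{|\omega_1|^2|\omega_2|^2}\big(1 - \cos^2(\angle\omega_1\omega_2)\big) = \tfrac{4\sin^2(\angle\omega_1\omega_2)}{|\omega_1|^2|\omega_2|^2}$, since $\mathrm{Re}(\overline{\omega_1}\omega_2) = |\omega_1||\omega_2|\cos(\angle\omega_1\omega_2)$. In particular $D>0$ because $\omega_1,\omega_2$ are $\R$-linearly independent, so the hypothesis of the cited asymptotic is met.

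Next I would set $x_E = \tfrac{E(\varphi_A)}{\pi^2|\overline{\omega}_2\omega_1-\omega_2\overline{\omega}_1|}$ and invoke formula (\ref{index}) together with the stated estimate $A(x) = \tfrac{2\pi}{\sqrt D}x + O(x^{c})$ for some $c<1$, which gives $\mathrm{index}(\varphi_A) = A(x_E) = \tfrac{2\pi}{\sqrt D}x_E + O(x_E^{c})$. Dividing by $E(\varphi_A)$ and letting $E(\varphi_A)\to\infty$, the error term is $O\big(E(\varphi_A)^{c-1}\big)\to 0$, so the limit equals $\tfrac{2\pi}{\sqrt D}\cdot\tfrac{x_E}{E(\varphi_A)} = \tfrac{2\pi}{\sqrt D}\cdot\tfrac{1}{\pi^2|\overline{\omega}_2\omega_1-\omega_2\overline{\omega}_1|}$. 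Finally I would simplify using $|\overline{\omega}_2\omega_1 - \omega_2\overline{\omega}_1| = 2\,|\mathrm{Im}(\overline{\omega}_2\omega_1)| = 2|\omega_1||\omega_2|\,|\sin(\angle\omega_1\omega_2)| = 2|\omega_1||\omega_2|\sin(\angle\omega_1\omega_2)$ (taking the angle in $(0,\pi)$) and $\sqrt D = \tfrac{2\sin(\angle\omega_1\omega_2)}{|\omega_1||\omega_2|}$, so that
$$\frac{2\pi}{\sqrt D}\cdot\frac{1}{\pi^2\cdot 2|\omega_1||\omega_2|\sin(\angle\omega_1\omega_2)} = \frac{2\pi\,|\omega_1||\omega_2|}{2\sin(\angle\omega_1\omega_2)}\cdot\frac{1}{2\pi^2|\omega_1||\omega_2|\sin(\angle\omega_1\omega_2)} = \frac{1}{2\pi\sin^2(\angle\omega_1\omega_2)},$$
as claimed.

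The only genuine subtlety is bookkeeping with the two length factors $|\omega_1|,|\omega_2|$: they must cancel completely between $\sqrt D$ and $|\overline{\omega}_2\omega_1-\omega_2\overline{\omega}_1|$, and one must be careful that $A(x)$ counts lattice points for the \emph{strict} inequality so that (\ref{index}) applies verbatim; the $O(x^c)$ boundary term absorbs the finite discrepancy on the ellipse $\theta(k,l)=x_E$ anyway, so the distinction between strict and non-strict is immaterial in the limit. No step presents a real obstacle — the content is entirely in the cited count $A(x)=\tfrac{2\pi}{\sqrt D}x+O(x^c)$, which we are permitted to assume — so this is a short computation rather than a proof with a hard core.
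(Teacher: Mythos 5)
Your proposal is correct and follows exactly the route the paper intends: the paper's proof consists of the two identities $|\overline{\omega}_2\omega_1-\omega_2\overline{\omega}_1|=2|\omega_1||\omega_2|\sin(\angle\omega_1\omega_2)$ and $\omega_{1x}\omega_{2x}+\omega_{1y}\omega_{2y}=|\omega_1||\omega_2|\cos(\angle\omega_1\omega_2)$ followed by the phrase ``straightforward computation,'' and your identification of $a$, $b$, $c$, the evaluation $D=\tfrac{4\sin^2(\angle\omega_1\omega_2)}{|\omega_1|^2|\omega_2|^2}$, and the application of $A(x)=\tfrac{2\pi}{\sqrt D}x+O(x^c)$ to $x_E=\tfrac{E(\varphi_A)}{\pi^2|\overline{\omega}_2\omega_1-\omega_2\overline{\omega}_1|}$ is precisely that computation, carried out correctly.
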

\begin{proof}
  Taking account that $$|\overline{\omega}_2\omega_1-\omega_2\overline{\omega}_1|=2|\omega_1||\omega_2|\sin (\angle\omega_1\omega_2),\, \mbox{and}\,\,\,\, \omega_{1x}\omega_{2x}+\omega_{1y}\omega_{2y}=|\omega_1||\omega_2|\cos (\angle\omega_1\omega_2),$$
  the results follows by straightforward computation.
\end{proof}
%$$-(k^2+l^2)f_{k,l}+8f_{k,l}\alpha\bar{\alpha}+8\overline{f_{-k,-l}}\alpha\bar{\beta}=0\Leftrightarrow
%(k^2+l^2-8\alpha\bar{\alpha})
%f_{k,l}=-8\overline{f_{-k,-l}}\alpha\bar{\beta},$$ for all
%$k,l\in\mathbb{Z}$. Hence we have:
%$$
%\begin{array}{c}
%  (k^2+l^2-8\alpha\bar{\alpha})
%f_{k,l}=-8\overline{f_{-k,-l}}\alpha\bar{\beta}; \\
% (k^2+l^2-8\alpha\bar{\alpha})
%f_{-k,-l}=-8\overline{f_{k,l}}\alpha\bar{\beta}.  \\
%\end{array}
%$$
%Simple algebraic manipulation show us that, if $(k,l)\neq (0,0)$,
%then $k^2+l^2-16\alpha\bar{\alpha}=0$; on the other hand, from
%(\ref{popo}), we see that
%$\alpha\bar{\alpha}=\frac{n^2}{16}+\frac{m^2}{16}$ for some
%integers $n,m$. Then $k^2+l^2=n^2+m^2$ and $|f_{kl}|=|f_{-k-l}|$.
%In the case $k=l=0$, we have
%$f_{00}\bar{\alpha}=-\overline{f_{00}}\bar{\beta}$. Then $v$ is a
%Jacobi field if and only if $V$ is of the form
%
%with $V_0\in and
%$V_{k,l}$ is of the form
%$$V_{k,l}=\left(%
%\begin{array}{cc}
%  0 & f_{k,l} \\
%  -\overline{f_{-k,-l}} & 0\\
%\end{array}%
%\right)$$ with $|f_{kl}|=|f_{-k-l}|$.
%


\begin{thebibliography}{10}

\bibitem{BFP} F.E. Burstall, D. Ferus, F. Pedit, and U.
Pinkall, \emph{{H}armonic Tori in symmetric spaces and commuting
Hamiltonian systems on loop algebras}, Ann. of Math. \textbf{138}
(1993), 173--212.

\bibitem{BP}
F.E. Burstall and F. Pedit, \emph{{D}ressing orbits of harmonic
maps}, Duke Math. J. \textbf{80} (1995), 353--382.



\bibitem{BR}
F.E. Burstall, J. H. Rawnsley, \emph{{T}wistor Theory for
Riemannian Symmetric Spaces}, Lectures Notes in Math. 1424
Berlin, Heidelberg, 1990.

%\bibitem{CC}
%J. Cilleruelo and A. C\'{o}rdoba, \emph{Lattice points on ellipses}, Duke Math. J., vol. 76, no. 3, (1994), 741--750.

\bibitem{Co} H. Cohn, \emph{Advanced number theory}, Dover Publications, 1980.

\bibitem{Ra} J.H. Rawnsley, \emph{Noether's theorem for harmonic maps}, in: Diff. Geom.
Methods in Math. Phys., S. Sternberg, ed., Reidel, Dortrecht-Boston-London (1984), 197--202.
%\bibitem{Hu}
%M.N. Huxley, \emph{Area, lattice points, and exponential sums}. LMS Monographs, New Ser. 13, Oxford, 1996.

\bibitem{Ur}
H. Urakawa, \emph{Calculus of Variations and Harmonic Maps}. Translactions of Mathematical Monographs, Providence RI,  Vol. 132, 1993.

%\bibitem{BG}
%F.E. Burstall and M.A. Guest, \emph{{H}armonic two-spheres in
%compact symmetric spaces, revisited}, Math. Ann. \textbf{309}
%(1997), 541--572.
%

%
%%\bibitem{BW}
%%F.E. Burstall and J.C. Wood, \emph{{T}he construction of harmonic
%%maps into complex Grassmannians}, J. Diff. Geom. \textbf{23} (1986),
%%255--297.
%\bibitem{CP}
%N. Correia and R. Pacheco, \emph{Singular dressing actions on Harmonic maps}, The Quarterly Journal of Mathematics, doi:10.1093/qmath/hap022, (2009), 1--15.
%\bibitem{EW}J. Eells and J.C. Wood, \emph{{H}armonic maps from surfaces into complex projective spaces},
%Adv. in Math. \textbf{49} (1983), 217--263.
%
%\bibitem{Gu}
%M.A. Guest, \emph{{H}armonic maps, loop groups, and integrable
%systems}, Cambridge University Press, Cambridge, 1997.
%\bibitem{G}
%M.A. Guest, \emph{An update on Harmonic maps of Finite Uniton
%Number, via the Zero Curvature Equation},  Integrable Systems,
%Topology, and Physics: A Conference on Integrable Systems in
%Differential Geometry (Contemp. Math., Vol. 309, M. Guest et al.,
%eds.), Amer. Math. Soc., Providence, R. I. (2002), 85–--113.
%\bibitem{Ful}
%W. Fulton and J. Harris, \emph{\emph{R}epresentation theory (a first course), Graduate Texts in Mathematics}, Springer-Verlag New York Inc, 1991.
%%\bibitem{Pa1} R. Pacheco, \emph{Twistor fibrations giving primitive harmonic maps of
%%finite type.} International Journal of Mathematics and
%%Mathematical Sciences. Volume 2005, Number 20, 3199-3212, 2005.
%\bibitem{Pa} R. Pacheco,
%\emph{Harmonic two-spheres in the symplectic group Sp(n).}
%International Journal of Mathematics. Vol. \textbf{17}, No.3. (2006), 295--311.
%
%\bibitem{PS}
%A.N. Pressley and G.B. Segal, \emph{{L}oop Groups }, Oxford
%University Press, 1986.
%\bibitem{S}
%G.B. Segal, \emph{{L}oop groups and harmonic maps}, Advances in
%homotopy theory, London Math. Soc. Lecture notes 139, Cambridge
%University Press 1989, 153--164.
%
%\bibitem{SW} M. Svensson and  J. Wood,
%\emph{Filtrations, factorizations and explicit formulae for harmonic maps}, arXiv:0909.5582v2 [math.DG], (2010).
%
%\bibitem{U}
%K. Uhlenbeck, \emph{{H}armonic maps into Lie groups (classical
%solutions of the chiral model)}, J. Diff. Geom. \textbf{30}
%(1989), 1--50.
\end{thebibliography}
\end{document}